\documentclass[reqno]{amsart}

\usepackage{amsmath}
\usepackage{amsfonts}
\usepackage{rotating}
\usepackage{bbm}
\usepackage[all,cmtip]{xy}
\usepackage{amscd}
 
\usepackage{tikz}
 \usepackage{pgfplots}

 \usetikzlibrary{arrows}
 \tikzset{>=latex}
\usetikzlibrary{arrows.meta}
\pgfplotsset{compat=1.10}
\usetikzlibrary{decorations.markings}

\usepgfplotslibrary{fillbetween}
\usetikzlibrary{patterns}
\usepackage{mathrsfs}
\usepackage{color}
\usepackage{xcolor}
\usepackage{mathtools}
\usepackage{amssymb}
\usepackage[latin1]{inputenc}
\usepackage{graphicx}
\usepackage{dsfont}
\usepackage{color}
\usepackage{hyperref}
\usepackage{enumerate}
\usepackage{marginnote}

\usepackage{subcaption}
\captionsetup[subfigure]{labelfont=rm}

 \usepackage{comment}


\newtheorem{theorem}{Theorem}[section]

\newtheorem{lemma}[theorem]{Lemma}
\newtheorem{proposition}[theorem]{Proposition}

\theoremstyle{definition}
\newtheorem{definition}[theorem]{Definition}

\newtheorem{example}[theorem]{Example}

\theoremstyle{remark}






\newcommand{\C}{{\mathbb{C}}}

\newcommand{\R}{{\mathbb{R}}}

\newcommand{\Z}{{\mathbb{Z}}}

\renewcommand{\epsilon}{\varepsilon}
\renewcommand{\theta}{\vartheta}



\usepackage{graphicx}
 
\begin{document}

\title[]{A variational approach to  time-dependent planar two-center Stark-Zeeman systems}

 \author[U. Frauenfelder]{Urs Frauenfelder}
   \address{Department of Mathematics, University of Augsburg, 86159 Augsburg, Germany}
   \email {urs.frauenfelder@math.uni-augsburg.de}

 \author[S. Kim]{Seongchan Kim}
   \address{ Department of Mathematics Education, Kongju National University, Gongju 32588, Republic of Korea}
   \email {seongchankim@kongju.ac.kr}

\setcounter{tocdepth}{3}

\date{Recent modification; \today}
 \begin{abstract}
 
 We study periodic orbits in a time-dependent two-center Stark-Zeeman system, which models the motion of a charged particle attracted by two fixed Coulomb centers and subject to external magnetic and    time-dependent  electric   fields. A motivating example is provided by the bicircular restricted four-body problem which investigates the motion of a massless particle, influenced by the Newtonian gravitational attraction of the earth, moon and  periodically moving sun. Due to   singularities at the Coulomb centers, standard local variational approaches fail. To overcome this, we employ the Birkhoff regularization map and   construct a non-local regularized  action functional on a blown-up loop space, following a recent method due to Barutello-Ortega-Vernizi \cite{BOV21}.  We show that the    critical points of this regularized action functional satisfy a certain second-order delay differential equation and correspond to periodic solutions of the original system,   including collisions. Additionally, we examine the symmetric structure of the regularized functional. 

  \end{abstract}
   \maketitle

   


\section{Introduction}

A two-center Stark-Zeeman system describes the dynamics of an electron attracted by two proton subject to an external electric and magnetic field. We allow to electric field to depend periodically on time. We are interested in studying periodic solutions of the electron. It might happen that the electron along its periodic orbit collides sometimes with the protons. In the autonomous case where the electric field does not depend on time a variational approach to periodic solutions which might have collisions can be obtained by blowing up the energy hypersurface of the electron. There are various versions how to do that. The Levi-Civita regularization based on the complex squaring map regularizes just collisions with one of the protons. The Birkhoff regularization regularizes collisions with both protons simultaneously.  See \cite{CFvK17SZ, CFZ23SZ} for further details. 

A new regularization technique was recently discovered by Barutello, Ortega, and Verzini \cite{BOV21}. In contrast to the classical regularization, Barutello, Ortega, and Verzini do not blow-up the energy hypersurface, but the loop space. In particular, this new regularization technique is of great interest in the case where the Hamiltonian system is not autonomous, but depends periodically on time, since in this case there is no preserved energy and therefore no energy hypersuface which can be blown-up. Barutello, Ortega, and Verzini also use the complex squaring map, but combine it with a reparametrization of the loop. We refer to this as \emph{Levi-Civita type BOV-regularization}. In \cite{Fra25SZ} it was explained how Levi-Civita type BOV-regularization can be applied to obtain a variational approach to collisional periodic orbits in one-center Stark-Zeeman systems, where there is only one proton.

In this note we show that the new approach can as well be applied to the case where there are two protons and the electron is allowed to collide with both of them. To regularize collisions with both protons simultaneously we develop a \emph{Birkhoff type BOV-regularization} and prove
\\ \\
\textbf{Theorem\,A: } \emph{Collisional periodic orbits in two-center Stark-Zeeman systems can be variationally obtained via Birkhoff type BOV-regularization.}
\\ \\
The mathematically precise statement of Theorem\,A can be found in Theorem~\ref{thm:main}.

\subsection*{Acknowledgments}
Urs Frauenfelder acknowledges partial support by DFG project
FR 2637/6-1.

 \section{Time-dependent two-center Stark-Zeeman systems}\label{sec:SZ}

We follow the framework developed in \cite{CFZ23SZ, Fra25SZ}.

Let $E$ and $M$ be two distinct points in $\R^2 \cong \C,$ representing the positions of two massive bodies, referred to as the earth and the moon, respectively. Let $ 1-\mu $ and $\mu>0$ denote  their respective masses.

Assume that an open subset $  \mathfrak{D}_0 \subset \C$  contains the two points $E$ and $M$ and define $\mathfrak{D}  := \mathfrak{D}_0 \setminus \{ E, M \}.$ Given a smooth function $E \colon S^1 \times \mathfrak{D}_0  \to \R,$ we write 
\[
E_t \colon \mathfrak{D}_0 \to \R, \quad E_t(q) = E(t, q )
\]
for each $t \in S^1.$ We then define the time-dependent potential
\[
V_t \colon \mathfrak{D} \to \R, \quad V_t(q):= E_t(q) - \frac{ 1-\mu }{\lvert q- E\rvert} - \frac{\mu }{\lvert q - M\rvert}
\]
  and the associated Hamiltonian
\[
H_t \colon T^* \mathfrak{D} \to \R, \quad H_t(q,p):=\frac{1}{2}\lvert p \rvert^2 + V_t(q).
\]

Let $\mathfrak{B} \colon \mathfrak{D}_0 \to \R$ be a smooth function, called the magnetic field.  It defines a magnetic  two-form 
\[
\sigma_{\mathfrak{B}} := \mathfrak{B} dq_1 \wedge dq_2 \in \Omega^2 ( \mathfrak{D}_0 ) ,
\]
which  lifts to the twisted symplectic form  
\[
\omega_{\mathfrak{B}} = \sum_{j=1}^2 dp_j \wedge dq_j + \pi^* \sigma_{\mathfrak{B}} \in \Omega^2( T^* \mathfrak{D}_0),
\]
where $\pi \colon T^* \mathfrak{D}_0 \to \mathfrak{D}_0$ denotes the footpoint projection. 
Since the second de Rham cohomology group of $\mathfrak{D}_0$ is trivial, there exists a one-form $A \in \Omega^1(\mathfrak{D}_0)$ such that 
\begin{equation}\label{eq:primitiveA}
\sigma_{\mathfrak{B}} = dA.
\end{equation}

The dynamics of the time-dependent two-center Stark-Zeeman system is governed by the flow of the twisted Hamiltonian vector field $X_{H_t}^{\mathfrak{B}}$  implicitly defined~by
\[
d H_t = \omega_{\mathfrak{B}}( \cdot, X_{H_t}^{\mathfrak{B}}).
\]
A periodic orbit $x \colon S^1 \to T^* \mathfrak{D}$ is a solution to the first-order ODE 
\begin{equation}\label{eq:twistedODE}
\dot{x}(t) =  X_{H_t}^{\mathfrak{B}} (x(t)), \quad t \in S^1.
\end{equation}
Due to the time-dependence of  the Hamiltonian,  energy is not conserved, and   periodic orbits do  not lie on  fixed  energy levels. 
Writing $x(t) = (q(t),p(t)),$ we may recast   the first-order ODE \eqref{eq:twistedODE} for $x$ into a second-order ODE for the position $q$ by
\begin{equation}\label{eq:twistedODEforq}
\begin{aligned}
\ddot{q}  &= -\mathfrak{B}(q) i \dot{q} -    \nabla V_t(q)\\
&= -\mathfrak{B}(q) i \dot{q} - \frac{(1-\mu) (q-E)}{\lvert q- E\rvert^3 } - \frac{\mu(q-M)}{\lvert q - M \rvert^3} - \nabla E_t(q).
\end{aligned}
\end{equation}

Let   $\mathfrak{L}\mathfrak{S}$ denote  the free loop space a  set $\mathfrak{S}.$
Define the   $L^2$-inner product $\left< \cdot, \cdot \right> \colon \mathfrak{L}\C \times \mathfrak{L}\C \to \R$   by
\[
\left< \xi, \eta \right> = \int_0^1 \left< \xi(t), \eta(t) \right> dt = \int_0^1 {\rm Re}(\bar{\xi}(t){\eta(t)})dt, \quad \quad \xi, \eta \in \mathfrak{L}\C,
\]
and let the associated $L^2$-norm be given by
\[
\| \xi \|:=\sqrt{ \left< \xi, \xi \right>}.
\]
 One readily verifies  that any solution $q$ to  ODE \eqref{eq:twistedODEforq} is a critical point of the action functional $\mathscr{A} \colon \mathfrak{L}\mathfrak{D} \to \R$ defined by
\begin{equation*}\label{eq:unregularizedfunctional}
\begin{aligned}
  \mathscr{A}(q)  = \frac{1}{2}\|\dot{q} \|^2  - \int_{S^1} q^*A - \int_0^1 V_t (q(t))dt,
\end{aligned}
\end{equation*}
where $A$ is any  primitive of the magnetic form $\mathfrak{B},$ see \eqref{eq:primitiveA}.

 \begin{definition} 
A continuous map   $q \colon S^1 \to \C$   is called a {\it generalized solution} of \eqref{eq:twistedODEforq} if it satisfies the following conditions:
\begin{enumerate}
    \item the collision set $\mathfrak{C}_q = \{ t \in S^1 \mid q(t) \in \{E, M \}\}$ is finite;
    
    \item  on the complement $S^1 \setminus \mathfrak{C}_q$ the map $q$ is smooth and satisfies \eqref{eq:twistedODEforq};  
    
    \item  the energy 
    \[
     \frac{1}{2}\lvert \dot{q}(t)\rvert ^2 -\frac{ 1-\mu }{\lvert q(t)-E\rvert}-\frac{\mu}{\lvert q(t)-M\rvert} + \int_t^1 \dot{E}_s(q(s))ds + E_t(q(t)),  
    \]
    defined on $ S^1 \setminus \mathfrak{C}_q,$
    extends to a continuous function on the whole circle. 
\end{enumerate}

\end{definition}
\noindent
Note that the energy is constant along every generalized solution.



\section{Birkhoff transformation}\label{sec:Birkhoff}

 We now normalize the positions of the two primaries by setting $E=-1$ and $M=+1 $ and denote  by 
\begin{equation}\label{eq:Birkhoffmap}
B \colon \C\setminus \{ 0\} \to \C, \quad B(z) = \frac{1}{2}\left( z + \frac{1}{z}\right)
\end{equation}
the Birkhoff regularization map.


Let $z \colon S^1 \to \C$ be a continuous map, and define its collision set by
\[
\mathfrak{C}_z:=\{ \tau \in S^1 \mid z(\tau) = \pm 1 \}.
\]
Assume that this set is finite and that the following integral is well-defined:
\begin{equation*}\label{eq:hatz}
\hat{z}:= \int_0^1  w(z(\tau)) d\tau <\infty,
\end{equation*}
where
\begin{equation}\label{eq:w}
w(z) = \frac{\lvert z - 1\rvert^2\lvert z +1\rvert^2} {4 \rvert z \rvert^2}  
\end{equation}
Note that $w$ is invariant under the inversion $z \mapsto \frac{1}{z}.$

 We introduce a reparametrization map $t_z \colon S^1 \to S^1,$ defined by 
\begin{equation}\label{eq:timerepart_z}
t_z(\tau):= \frac{1}{\hat z} \int_0^\tau  w(z(s))ds  .
\end{equation}
 Its derivative is given by
\begin{equation*}\label{eq:derivativeoftz}
t_z'(\tau) = \frac{w(z(\tau))}{\hat{z}}
\end{equation*}
so that the set of   critical point of $t_z$ coincides with the collision set:
\[
{\rm crit} (t_z)=\mathfrak{C}_z.
\]
Finiteness of   $\mathfrak{C}_z$  ensures that   the map $t_z$ is a strictly increasing continuous function with finitely many critical points, and thus defines  a  homeomorphism of $S^1.$ In particular, there exists a continuous inverse
\[
\tau_z:=t_z^{-1} \colon S^1 \to S^1
\]
which is smoothly differentiable on the complement $S^1 \setminus t_z(\mathfrak{C}_z),$ with derivative
\begin{equation*}\label{eq:derivativeoftau}
    \dot{\tau}_z(t) = \frac{\hat{z}}{w(z_{\tau}(\tau_z(t)))}
\end{equation*}
Using this, we define a continuous map  
\begin{equation}\label{eq:qz}
q_z \colon S^1 \to \C, \quad q_z(t):= B(z(\tau_z(t))) = \frac{1}{2}\left( z(\tau_z(t)) + \frac{1}{z(\tau_z(t))} \right),
\end{equation}
where $B$ denotes the Birkhoff map, see \eqref{eq:Birkhoffmap}. 
Since   $B$ is a $2$-to-$1$ covering branched at $\pm1,$ the collision set of $q$
\[
   \mathfrak{C}_q  := \{ t \in S^1 \mid q(t) = \pm 1 \} 
   \]
     satisfies the relation 
\[
\mathfrak{C}_z = \tau_z (\mathfrak{C}_q) \quad {\rm or\;\;equivalently\;\;}\;\mathfrak{C}_q = t_z(\mathfrak{C}_z).
\]
In particular, the  collision sets $\mathfrak{C}_z$ and $\mathfrak{C}_q$ are in one-to-one correspondence under the homeomorphisms $\tau_z$  and   $t_z$.


\section{Birkhoff type BOV-regularization}
We study Birkhoff type BOV-regularization of the  time-dependent planar  two-center Stark-Zeeman systems, described by the Newtonian equation
 \begin{equation}\label{eq:twistedODEforSZ}
\ddot{q}   = -\mathfrak{B}(q) i \dot{q} - \frac{(1-\mu) (q+1)}{\lvert q+1\rvert^3 } - \frac{\mu(q-1)}{\lvert q - 1 \rvert^3} - \nabla E_t(q).
\end{equation}
As explained in the introduction, periodic solutions that avoid collisions with $\pm1 $ are critical points of the action functional $\mathscr{A}\colon \mathfrak{L}\mathfrak{D} \to \R$ defined by
 \[  \mathscr{A}(q)  =   \frac{1}{2}\|\dot{q} \|^2  - \int_{S^1} q^*A      + \int_0^1 \frac{ 1-\mu }{\lvert q(t)+1\rvert}  dt + \int_0^1  \frac{\mu}{\lvert q(t) - 1\rvert} dt   - \int_0^1  E_t(q(t))  dt
 \]

Let $\mathfrak{Z}_0:=B^{-1}(\mathfrak{D}_0)$ denote the open subset of $\C$ containing  $ \pm1 ,$   which is the preimage of $\mathfrak{D}_0$ under the Birkhoff map $B.$ Note that $\mathfrak{Z} = \mathfrak{Z}_0\setminus \{ \pm 1 \} = B^{-1}(\mathfrak{D}).$
Since every $z \in \mathfrak{L}\mathfrak{Z}$ does neither hit $+1$  nor $-1,$   the map $t_z$ defined in \eqref{eq:timerepart_z} (and hence  its inverse $\tau_z$) is a diffeomorphism of the circle. 
We define
\begin{equation}\label{eq:SIGMA}
\Sigma \colon \mathfrak{L}\mathfrak{Z} \to \mathfrak{L}\mathfrak{D}, \quad \Sigma(z):=   q_z,
\end{equation}
where $q=q_z$ is given as in \eqref{eq:qz}.  
We compute
\begin{equation}\label{eq:dotqandz}
\dot{q}(t) = \frac{1}{2}\left( 1 - \frac{1}{z(\tau)^2}\right)  \frac{\hat{z}}{w(z(\tau ))} z'(\tau)
\end{equation}
and
\begin{align*}
\ddot{q}(t) &=  \Bigg[  \frac{z''(\tau)}{2} \frac{z(\tau)^2-1}{z(\tau)^2}  +   \frac{z'(\tau)^2}{z(\tau)^3}   - \frac{ z'(\tau)  }{2}\frac{z(\tau)^2-1}{z(\tau)^2}      \frac{\frac{dw}{dz} z'(\tau) + \frac{dw}{d\bar{z}}\bar{z}'(\tau) }{w(z(\tau))}      \Bigg]\frac{\hat{z}^2}{w(z(\tau))^2} \\
&=   \Bigg[  \frac{z(\tau)^2-1}{z(\tau)^2}  z''(\tau)  +   \frac{1-z(\tau)^2}{ z(\tau)^3} z'(\tau)^2 - \frac{ (z(\tau)^2-1)(\bar{z}(\tau)^2+1)}{ z(\tau)\lvert z(\tau)\rvert^2 (\bar{z}(\tau)^2-1) }\lvert z'(\tau)\rvert^2\Bigg]\frac{\hat{z}^2}{2w(z(\tau))^2}\end{align*}
where $\tau = \tau_z(t).$

We  now pull back the functional $\mathscr{A}$ via the map $\Sigma$: 
\begin{align*}
 \label{eq:regularizedfunctionalSZ}   \Sigma^*\mathscr{A}(z) &= \frac{\hat{z}}{2}\int_0^1 \frac{ \lvert z'(\tau)\rvert^2}{\lvert z(\tau)\rvert^2}d\tau - \int_{S^1} z^*{B}^*A +\frac{1-\mu}{2\hat{z}} \int_0^1 \frac{\lvert z(\tau)-1\rvert^2}{\lvert z(\tau)\rvert}d\tau \\
\nonumber    &\;\;\;\;+\frac{\mu}{2\hat{z}} \int_0^1\frac{\lvert z(\tau)+1\rvert^2}{\lvert z(\tau)\rvert}d\tau - \frac{1}{\hat{z}} \int_0^1 E_{t_{z}(\tau)} (B(z(\tau))) \frac{\lvert z(\tau)-1\rvert^2\lvert z(\tau)+1\rvert^2}{4\lvert z(\tau)\rvert^2}d\tau
\end{align*}
This expression naturally extends to a  functional $\mathscr{B} \colon \mathfrak{L}^*\mathfrak{Z}_0  \to \R,$ defined on the extended domain 
\begin{equation*}\label{eq:LstarZ0}
\mathfrak{L}^*\mathfrak{Z}_0:=\{ z \in \mathfrak{L}\mathfrak{Z}_0 \mid \hat{z}\neq 0\}.
\end{equation*}
via the same formula.
To streamline notation, we introduce the functionals
\begin{align*}
\nonumber \mathcal{F}(z) &= \hat{z} = \int_0^1   w(z(\tau))  d\tau, \quad  \quad \quad \mathcal{G}(z) = \frac{1}{2} \int_0^1 \frac{\lvert z'(\tau)\rvert^2}{\lvert z(\tau)\rvert^2}d\tau \\
\label{eq:functionalsEuler} \mathcal{H}_1(z)  &=  \frac{1}{2} \int_0^1   \frac{\lvert z(\tau)-1\rvert^2}{\lvert z(\tau) \rvert}  d\tau ,       \quad  \quad  \quad \mathcal{H}_2(z) =\frac{1}{2} \int_0^1   \frac{\lvert z(\tau)+1\rvert^2}{\lvert z(\tau) \rvert}  d\tau \\
\nonumber \mathcal{M}(z)&= \int_{S^1}z^*B^*A, \quad     \quad \mathcal{E}(z)= \frac{1}{\hat{z}}  \int_0^1 E_{t_{z}(\tau)} (B(z(\tau))) \frac{\lvert z(\tau)-1\rvert^2\lvert z(\tau)+1\rvert^2}{4\lvert z(\tau)\rvert^2}d\tau
\end{align*}
so that
\[ 
\mathscr{B}(z) =   \mathcal{F}(z)\mathcal{G}(z) +\frac{(1-\mu)\mathcal{H}_1(z)+\mu \mathcal{H}_2(z)}{ \mathcal{F}(z)}- \mathcal{M}(z) - \mathcal{E}(z).
\]

We compute the  differentials of the above functionals individually. Choose $z \in \mathfrak{L}^*\mathfrak{Z}_0$ and $\xi \in T_z \mathfrak{L}^*\mathfrak{Z}_0= \mathfrak{L}\C.$
We compute that
\begin{align}
\nonumber d\mathcal{F}(z)\xi &= \int_0^1   \left< \frac{(z-1)\lvert z+1\rvert^2  +(z+1)\lvert z-1\rvert^2}{{2\lvert z \rvert^2} } - \frac{z\lvert z-1\rvert^2 \lvert z+1\rvert^2 }{2\lvert z\rvert^4}  , \xi \right> d\tau \\
\label{eq:differentaildF} &= \int_0^1   \left< \frac{ z(z^2-1)(\bar{z}^2+1) }{2\lvert z\rvert^4}  , \xi \right> d\tau
\end{align}
and further that
\begin{align*}
    d\mathcal{G}(z)\xi &= \int_0^1 \left< \frac{z'}{\lvert z\rvert^2}, \xi'\right> d\tau - \int_0^1 \left< \frac{\lvert z'\rvert^2}{\lvert z\rvert^4}z, \xi \right>d\tau \\
    &= - \int_0^1 \left< \frac{d}{d\tau} \frac{z'}{\lvert z\rvert^2}, \xi \right> d\tau - \int_0^1 \left< \frac{\lvert z'\rvert^2}{\lvert z\rvert^4}z, \xi \right>d\tau \\
        &=  \int_0^1 \left< -  \frac{z''}{\lvert z\rvert^2} + \frac{\bar{z}(z')^2}{\lvert z \rvert^4}, \xi \right> d\tau 
\end{align*}
Moreover, we compute
\begin{align*}
    d\mathcal{H}_1(z)\xi &= \int_0^1 \left< \frac{z-1}{\lvert z\rvert} - \frac{ z\lvert z-1\rvert^2}{2\lvert z \rvert^3} , \xi \right> d\tau=\int_0^1 \left<   \frac{z(z-1)(\bar{z}+1)}{2\lvert z \rvert^3} , \xi \right> d\tau\\
        d\mathcal{H}_2(z)\xi &= \int_0^1 \left< \frac{z+1}{\lvert z\rvert} - \frac{ z\lvert z+1\rvert^2}{2\lvert z \rvert^3} , \xi \right> d\tau =\int_0^1 \left<   \frac{z(z+1)(\bar{z}-1)}{2\lvert z \rvert^3} , \xi \right> d\tau
\end{align*}
 Since the magnetic two-form $\sigma_{\mathfrak{B}}$ pulls back under the Birkhoff map $B$ to 
 \[
 B^* \sigma_{\mathfrak{B}} (z) =  \frac{ \lvert z-1\rvert^2 \lvert z+1\rvert^2}{4\lvert z\rvert^4} \mathfrak{B}( B(z))  dx \wedge dy, \quad \quad z=(x,y),
 \]
the differential of the magnetic part $\mathcal{M}(z)$ is computed as:
\begin{align*}
 d\mathfrak{M}(z)\xi &= \int_0^1 \mathcal{L}_{\xi}( B^*A)(z'(\tau))d\tau\\ 
&= \int_{0}^1B^*\sigma_{\mathfrak{B}}(z(\tau))(\xi(\tau), z'(\tau))d\tau                   \\
 &=   \left<    \frac{\lvert z-1\rvert^2 \lvert z+1\rvert^2}{4\lvert z\rvert^4} \mathfrak{B} (B(z )) i z' , \xi\right>
\end{align*}
To compute the differential of the electric part $\mathcal{E},$ define $\mathcal{N}(z) := \hat{z}\mathcal{E}(z).$ Using the identity \eqref{eq:differentaildF} we compute:
\begin{align*}
     d\mathcal{N}(z)\xi &= \int_0^1 \dot{E}_{t_z(\tau)}(B(z(\tau)))dt_z(\xi)(\tau) w(z(\tau)) d\tau \\
     &\;\;\;\;+ \int_0^1 \left< \nabla E_{t_z(\tau)}(B(z(\tau))), dB(z(\tau))\xi(\tau)\right>w(z(\tau))d\tau \\
     &\;\;\;\;+ \int_0^1E_{t_z(\tau)}(B(z(\tau)))\left<   \frac{ z(\tau)(z(\tau)^2-1)(\bar{z}(\tau)^2+1) }{2\lvert z(\tau)\rvert^4}  , \xi(\tau) \right> d\tau\\
&= \int_0^1 \dot{E}_{t_z(\tau)}(B(z(\tau)))dt_z(\xi)(\tau) w(z(\tau)) d\tau \\
     &\;\;\;\;+ \int_0^1 \left< \nabla E_{t_z(\tau)}(B(z(\tau))), dB(z(\tau))\xi(\tau)\right>w(z(\tau))d\tau \\
     &\;\;\;\;+ \int_0^1E_{t_z(\tau)}(B(z(\tau)))\left<   \frac{ z(\tau)(z(\tau)^2-1)(\bar{z}(\tau)^2+1) }{2\lvert z(\tau)\rvert^4}  , \xi(\tau) \right> d\tau\\
&= \frac{1}{\hat{z}}\int_0^1 \dot{E}_{t_z(\tau)}(B(z(\tau)))  \left( \int_0^\tau  \left<   \frac{ z(s)(z(s)^2-1)(\bar{z}(s)^2+1) }{2\lvert z(s)\rvert^4}    , \xi(s) \right>  ds  \right) w(z(\tau)) d\tau \\
&\;\;\;\;- \frac{1}{\hat{z}^2}\left<   \frac{ z (z ^2-1)(\bar{z} ^2+1) }{2\lvert z \rvert^4}   , \xi \right> \int_0^1 \dot{E}_{t_z(\tau)}(B(z(\tau))) \left(   \int_0^\tau w(z(s))ds         \right) w(z(\tau)) d\tau \\
     &\;\;\;\;+ \int_0^1 \left< \nabla E_{t_z(\tau)}(B(z(\tau)))  \overline{dB(z(\tau))},\xi(\tau)\right>w(z(\tau))d\tau \\
     &\;\;\;\;+ \int_0^1E_{t_z(\tau)}(B(z(\tau)))\left<   \frac{ z(\tau)(z(\tau)^2-1)(\bar{z}(\tau)^2+1) }{2\lvert z(\tau)\rvert^4}  , \xi(\tau) \right> d\tau
     \end{align*}
Therefore, if we introduce
\begin{align*}
\mathcal{E}^1(z) &=   \frac{1}{\hat{z}^2} \int_0^1 \dot{E}_{t_z(\tau)}(B(z(\tau))) \left(   \int_0^\tau w(z(s))ds         \right) w(z(\tau)) d\tau   \\
\varepsilon_1(z)(\tau)&=   \frac{1}{\hat{z}} \left( \int_\tau^1 \dot{E}_{t_z(s)}(B(z(s)) w(z(s))ds \right) \frac{ z(\tau)(z(\tau)^2-1)(\bar{z}(\tau)^2+1) }{2\lvert z(\tau)\rvert^4}     \\
\varepsilon_2(z)(\tau)&=  \nabla E_{t_z(\tau)}(B(z(\tau))) \frac{1}{2}\left( 1 - \frac{1}{\bar{z}(\tau)^2}\right) w(z(\tau))    \\
\varepsilon_3(z)(\tau)&= E_{t_z(\tau)}(B(z(\tau)))    \frac{ z(\tau)(z(\tau)^2-1)(\bar{z}(\tau)^2+1) }{2\lvert z(\tau)\rvert^4}   
\end{align*}
then the differential of $\mathcal{E}$ becomes
\begin{align*}
    d\mathcal{E}(z)\xi  &= \frac{1}{\hat{z}}d\mathcal{N}(z)\xi - \frac{\mathcal{E}(z)}{\hat{z}}d\mathcal{F}(z)\xi \\
    &=  - \frac{ \mathcal{E}(z) + \mathcal{E}^1(z)}{\hat{z}}\left<   \frac{ z (z ^2-1)(\bar{z} ^2+1) }{2\lvert z \rvert^4}   , \xi \right> + \frac{1}{\hat{z}}\left< \varepsilon_1(z)+\varepsilon_2(z)+\varepsilon_3(z), \xi\right>.
\end{align*}

We have thus proven:

\begin{proposition}
  A point $z \in \mathfrak{L}^*\mathfrak{Z}_0$ is a critical point of the regularized functional $\mathscr{B}$ if and only if it is a solution of the following second-order delay differential equation
\begin{align}
 \nonumber   z''(\tau) &= \frac{1}{2\hat{z}^2}\frac{ z(\tau)(z(\tau)^2-1)(\bar{z}(\tau)^2+1) }{\lvert z(\tau)\rvert^2}C  + \frac{\bar{z}(\tau)}{\lvert z(\tau)\rvert^2}z'(\tau)^2\\
 \label{eq:delayequationgeneral}  &\;\;\;\;+ \frac{1}{2\hat{z}^2}\Bigg[ (1-\mu) \frac{z(z-1)(\bar{z}+1)}{  \lvert z \rvert } + \mu   \frac{z(z+1)(\bar{z}-1)}{ \lvert z \rvert }      \Bigg]  \\
 \nonumber    &\;\;\;\; - \frac{w(z(\tau))}{\hat{z}} \mathfrak{B}(B(z(\tau)))iz'(\tau)   - \frac{\lvert z(\tau)\rvert^2}{\hat{z}^2}( \varepsilon_1(z)(\tau)+\varepsilon_2(z)(\tau)+\varepsilon_3(z)(\tau)),
\end{align}
where the constant $C$ is given by
\begin{equation*}\label{eq:constantKgeneral}
C =    \frac{\hat{z}}{2}   \int_0^1 \frac{\lvert z'(\tau)\rvert^2}{\lvert z(\tau)\rvert^2}d\tau    - \frac{1-\mu}{2\hat{z}} \int_0^1   \frac{\lvert z(\tau)-1\rvert^2}{\lvert z(\tau) \rvert}  d\tau  - \frac{\mu}{2\hat{z}} \int_0^1   \frac{\lvert z(\tau)+1\rvert^2}{\lvert z(\tau) \rvert}  d\tau  + \mathcal{E}(z) +\mathcal{E}^1(z)
\end{equation*}
\end{proposition}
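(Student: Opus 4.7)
The plan is to compute $d\mathscr{B}(z)\xi$ using the individual differentials already recorded above, set it equal to zero for every $\xi \in T_z\mathfrak{L}^*\mathfrak{Z}_0 = \mathfrak{L}\C$, and extract the pointwise Euler--Lagrange equation by the fundamental lemma, finally solving for $z''(\tau)$.

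Applying the product and quotient rules to $\mathscr{B} = \mathcal{F}\mathcal{G} + \frac{(1-\mu)\mathcal{H}_1+\mu\mathcal{H}_2}{\mathcal{F}} - \mathcal{M} - \mathcal{E}$ yields
\[ d\mathscr{B} = \mathcal{G}\,d\mathcal{F} + \mathcal{F}\,d\mathcal{G} + \frac{(1-\mu)d\mathcal{H}_1 + \mu d\mathcal{H}_2}{\mathcal{F}} - \frac{(1-\mu)\mathcal{H}_1 + \mu\mathcal{H}_2}{\mathcal{F}^2}\,d\mathcal{F} - d\mathcal{M} - d\mathcal{E}. \]
Every summand on the right-hand side has already been written as an $L^2$-pairing with $\xi$, the only integration by parts having been performed in the computation of $d\mathcal{G}$.

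The crucial algebraic step is to collect those contributions that contain the direction $\frac{z(z^2-1)(\bar z^2+1)}{2|z|^4}$. This direction appears as the integrand of $d\mathcal{F}$ itself (premultiplied by $\mathcal{G}$ and by $-((1-\mu)\mathcal{H}_1+\mu\mathcal{H}_2)/\mathcal{F}^2$), and via the formula for $d\mathcal{E}$ also multiplies the scalar $-(\mathcal{E}+\mathcal{E}^1)/\hat z$. Adding the three scalars $\mathcal{G}(z)$, $-((1-\mu)\mathcal{H}_1+\mu\mathcal{H}_2)/\hat z^2$, and $(\mathcal{E}+\mathcal{E}^1)/\hat z$ and using the explicit formulas for $\mathcal{G}$ and $\mathcal{H}_i$ shows that the combined coefficient is exactly $C/\hat z$ for $C$ as defined in the statement.

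It then remains to apply the pointwise fundamental lemma of the calculus of variations: since all summands are $L^2$-pairings against an arbitrary loop $\xi$, their pointwise sum vanishes. The quantity $z''(\tau)$ enters only through $\mathcal{F}\,d\mathcal{G}$, via the term $-\hat z\,z''/|z|^2$, so multiplying the resulting identity by $-|z|^2/\hat z$ isolates $z''$ and produces the displayed equation. The remaining terms fall into place: $\frac{\bar z}{|z|^2}(z')^2$ comes from the other half of $d\mathcal{G}$, the twist $-w(z)\mathfrak{B}(B(z))iz'/\hat z$ comes from $-d\mathcal{M}$, and the non-local contribution $-|z|^2(\varepsilon_1+\varepsilon_2+\varepsilon_3)/\hat z^2$ comes from the second part of $-d\mathcal{E}$; the \emph{delay} character is carried entirely by $\varepsilon_1$, whose anticipatory integral $\int_\tau^1 \dot E_{t_z(s)}(B(z))\,w(z)\,ds$ encodes dependence on $z$ at times later than $\tau$. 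The argument is purely organizational, and the only real obstacle is the careful sign-bookkeeping needed to recognize the compound coefficient as $C/\hat z$.
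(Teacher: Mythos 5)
Your proposal is correct and follows essentially the same route as the paper: the paper computes the differentials of $\mathcal{F},\mathcal{G},\mathcal{H}_1,\mathcal{H}_2,\mathcal{M},\mathcal{E}$ as $L^2$-pairings (with the Fubini rearrangement producing the anticipatory term $\varepsilon_1$) and then reads off the Euler--Lagrange equation exactly as you assemble it. Your identification of the combined coefficient of $\frac{z(z^2-1)(\bar z^2+1)}{2|z|^4}$ as $C/\hat z$, namely $\mathcal{G}-\bigl((1-\mu)\mathcal{H}_1+\mu\mathcal{H}_2\bigr)/\hat z^2+(\mathcal{E}+\mathcal{E}^1)/\hat z$, and the final multiplication by $|z|^2/\hat z$ to isolate $z''$ check out against the displayed equation.
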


To investigate the correspondence between periodic solutions of the Newtonian equation \eqref{eq:twistedODEforSZ} and the delay equation \eqref{eq:delayequationgeneral}, we first extend the definition of the map $t_z,$ originally defined in \eqref{eq:timerepart_z} for $z \in \mathfrak{L}\mathfrak{Z},$ to those $z \in \mathfrak{L}^*\mathfrak{Z}_0$ which are critical points of the regularized functional $\mathscr{B}.$

Assume that $z\in \mathfrak{L}^*\mathfrak{Z}_0$   is a  solution to the delay equation \eqref{eq:delayequationgeneral}. We define $t_z \colon S^1 \to S^1$ exactly by the same formula as in  \eqref{eq:timerepart_z}:
\[
t_z(\tau):=\frac{1}{\hat{z}}\int_0^{\tau}w(z(s))ds.
\]
As before, the collision set is given by
\[
\mathfrak{C}_z = \{ \tau \in S^1 \mid z(\tau) = \pm 1 \} = {\rm crit}(t_z).
\]

\begin{lemma}\label{lem:finitecollisionset}
    The collision set $\mathfrak{C}_z$ is finite.
\end{lemma}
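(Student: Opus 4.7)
The plan is proof by contradiction. Suppose $\mathfrak{C}_z$ is infinite. By compactness of $S^1$, a subsequence of collision times converges to a point $\tau_\infty\in S^1$. Since $\{+1,-1\}$ are isolated, continuity of $z$ forces (after passing to a further subsequence) that $z(\tau_n)$ equals a common value $\zeta\in\{+1,-1\}$ for all $n$, with $z(\tau_\infty)=\zeta$. I treat $\zeta=+1$; the case $\zeta=-1$ is analogous. A bootstrap argument shows that $z\in C^\infty(S^1,\C)$, since the right-hand side of \eqref{eq:delayequationgeneral} along $z$ is smooth in $\tau$ (the non-local ingredients $C$, $t_z$ and the integrals defining $\varepsilon_1$ are smooth in $\tau$ once $z$ is). Applying Taylor's theorem iteratively to the smooth $\C$-valued function $\tau\mapsto z(\tau)-1$, whose zeros $\tau_n$ accumulate at $\tau_\infty$, yields $z^{(k)}(\tau_\infty)=0$ for every $k\ge 1$; in particular $z'(\tau_\infty)=0$.

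The central step is to reinterpret \eqref{eq:delayequationgeneral} as a local ODE. One freezes only the genuinely non-local pieces evaluated along the given $z$: the constant $C$, the reparametrization $t_z(\cdot)$, and the integral $\tau\mapsto\int_\tau^1\dot{E}_{t_z(s)}(B(z(s)))\,w(z(s))\,ds$ entering $\varepsilon_1$. This recasts \eqref{eq:delayequationgeneral} as a local second-order ODE $\tilde z''(\tau)=F(\tau,\tilde z(\tau),\bar{\tilde z}(\tau),\tilde z'(\tau))$ on $\mathfrak{Z}_0$, whose right-hand side $F$ is smooth in all its arguments. Inspection shows that each summand of $F$ carries a factor that vanishes at $(\tilde z,\bar{\tilde z},\tilde z')=(1,1,0)$: the $C$-term as well as the $\varepsilon_1$- and $\varepsilon_3$-contributions carry $\tilde z^2-1$; the two Kepler terms carry $(\tilde z-1)(\bar{\tilde z}+1)$ and $(\tilde z+1)(\bar{\tilde z}-1)$ respectively; the kinetic and magnetic terms carry $\tilde z'$; and the $\varepsilon_2$-contribution carries $w(\tilde z)(1-1/\bar{\tilde z}^2)$. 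Hence $F(\tau,1,1,0)=0$ for every $\tau\in S^1$, so the constant $\tilde z\equiv 1$ solves the frozen ODE with initial data $(\tilde z(\tau_\infty),\tilde z'(\tau_\infty))=(1,0)$. Since $z$ satisfies the same ODE with the same initial data, Picard--Lindel\"{o}f uniqueness gives $z\equiv 1$ on an open neighbourhood of $\tau_\infty$.

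To conclude, let $V:=\{\tau\in S^1\mid z(\tau)=1\text{ and }z'(\tau)=0\}$. It is closed by continuity, nonempty since $\tau_\infty\in V$, and, by repeating the local uniqueness argument at an arbitrary point of $V$, also open. Since $S^1$ is connected, $V=S^1$, so $z\equiv 1$. But then $\hat z=\int_0^1 w(1)\,d\tau=0$, contradicting $z\in\mathfrak{L}^*\mathfrak{Z}_0$.

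The main technical obstacle I foresee is the careful verification that $F(\tau,1,1,0)=0$ identically: one must isolate, inside each $\varepsilon_j$, the \emph{local} factor (which must vanish at $z=1$) from the \emph{non-local} coefficient (which need not), and check that this separation survives all the algebraic manipulations in \eqref{eq:delayequationgeneral}. The remainder is a standard connectedness plus Picard--Lindel\"{o}f bootstrap.
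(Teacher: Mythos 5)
Your proof is correct and rests on exactly the same mechanism as the paper's: every summand on the right-hand side of \eqref{eq:delayequationgeneral} vanishes at a collision point where $z'=0$, so after freezing the non-local data one can play the constant solution $\tilde z\equiv\pm1$ against $z$ via ODE uniqueness and contradict $\hat z\neq 0$. The only difference is organizational (you argue from an accumulation point of $\mathfrak{C}_z$ plus a connectedness bootstrap, while the paper shows directly that $z'\neq 0$ at every collision time and deduces discreteness), and your version spells out the freezing of the non-local terms and the contradiction $\hat z=\int_0^1 w(\pm1)\,d\tau=0$ more explicitly than the paper does.
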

\begin{proof}
The right-hand side of \eqref{eq:delayequationgeneral} can be written in the form 
\[
z''(\tau) = a(z,\tau)z(\tau) + b(z,\tau)\bar{z}(\tau) +c(z,\tau)z'(\tau) 
\]
for continuous functions  $a,b,$ and $c,$ where $a(z,\tau)=b(z,\tau)=0$ whenever $\tau \in \mathfrak{C}_z.$  Assume that $z'(\tau_*)=0$ for some $\tau_* \in \mathfrak{C}_z.$ Then all terms on the right-hand side vanish at $\tau_*,$ implying $z''(\tau_*)=0.$
By the uniqueness theorem of ODE, this would force $ z\equiv  0,$  contradicting the assumption that  $z \in \mathfrak{L}^*\mathfrak{Z}_0.$ Therefore,  $z'(\tau )\neq0$ for every $\tau  \in \mathfrak{C}_z,$ from which we see that the collision set $\mathfrak{C}_z$ is a discrete subset of $S^1,$ and hence finite. 
This completes the proof.
\end{proof}

The previous lemma tells us that the map $t_z$ is a strictly increasing and continuous function having only finitely many critical points. Hence, $t_z$ is a homeomorphism of $S^1,$ and so is its continuous inverse    $\tau_z := t_z^{-1} \colon S^1 \to S^1.$  We then define 
\[
q = q_z \colon S^1 \to \mathfrak{D}_0
\]
as before. Namely, given a critical point $z \in \mathcal{L}^*\mathfrak{Z}_0$ of $\mathscr{B},$ we define
\begin{equation*}\label{eq:qgeneralcase}
q(t) = B(z(\tau_z(t))),
\end{equation*}
which is continuous on the whole circle and smoothly differentiable on the complement of the collision set $\mathfrak{C}_q = t_z(\mathfrak{C}_z).$

Suppose that $z \in \mathfrak{L}^*\mathfrak{Z}_0$ is a solution of the delay equation \eqref{eq:delayequationgeneral} and let $q\colon S^1 \to \mathfrak{D}_0$ be the corresponding map as above.   If  $\mathfrak{C}_z$ is empty, then the map $q$ is smooth and directly satisfies the Newtonian equation   \eqref{eq:twistedODEforSZ} avoiding collisions with $\pm 1. $

We now consider the case $\#\mathfrak{C}_q  = N >0. $ Our goal is to show that $q$ is a generalized periodic   solution of  \eqref{eq:twistedODEforSZ}.
To this end, we decompose the complement $S^1 \setminus \mathfrak{C}_z$ into $N$ open intervals:
\[
S^1 \setminus \mathfrak{C}_z = \bigcup_{j=1}^N \mathcal{I}_j,
\]
where each component is of the form $\mathcal{I}_j:= (\tau_j^-, \tau_j^+),$ and we adopt the cyclic convention 
\[
\tau_j^+ = \tau_{j+1}^-, \;j= 1,2,\ldots, n-1 \quad \quad \text{ and } \quad \quad \tau_n^+ = \tau_1^-.
\]
Let
\[
t_j^\pm :=t_z(\tau_j^\pm), \;\;\; j = 1,\ldots, N
\]
denote the corresponding zeros of the map $q $ and define
\[
\mathcal{J}_j := t_z(\mathcal{I}_j) = (t_j^-, t_j^+) \subset S^1 \setminus \mathfrak{C}_q. 
\]

We define
\[
\mathcal{C}(q) := \int_0^1  E_t(q(t))  dt
\]
so that
\[
\mathcal{E}(z) = \mathcal{C}(q).
\]
We also compute,  by the change of variable $\tau=\tau_z(t)$, that
\begin{align*}
\mathcal{E}^1(z) &=    \int_0^1 \dot{E}_{t_z(\tau)}(B(z(\tau))) \left(   \int_0^\tau \frac{w(z(s))}{\hat{z}}ds         \right) \frac{w(z(\tau))}{\hat{z}} d\tau   \\
&= \int_0^1 \dot{E}_t (q(t)) \left( \int_0^t ds \right) dt\\
&= \int_0^1 t \dot{E}_t (q(t)) dt\\
&=: \mathcal{C}^1(q).
\end{align*}
Moreover, using  \eqref{eq:dotqandz} we find
\[
\frac{z(\tau)^2-1}{2z(\tau)^2}\frac{\hat{z}}{w(z(\tau))}\mathfrak{B}(B(z(\tau))) i z'(\tau) = \mathfrak{B}(q(t))i \dot{q}(t) .
\]
Using the expressions for $\dot{q}(t)$ and  $\ddot{q}(t)$ derived above    and substituting the delay equation \eqref{eq:delayequationgeneral},    we   find on each interval $\mathcal{J}_j$ that 
 \begin{align*}
\nonumber \ddot{q}(t) &= \frac{4 \bar{z}(\tau) (\bar{z}(\tau)^2+1)}{ ( \bar{z}(\tau)^2-1)^2}C        - \frac{2 \bar{z}(\tau) (\bar{z}(\tau)^2+1)}{( \bar{z}(\tau)^2-1)^2}\lvert \dot{q}(t)\rvert ^2 + \frac{4(1-\mu) \lvert z(\tau)\rvert \bar{z}(\tau)}{  (\bar{z}(\tau)-1)^2\lvert z(\tau)+1\rvert^2  } \\
\nonumber & \;\;\;\; + \frac{4\mu  \lvert z(\tau)\rvert \bar{z}(\tau)}{ (\bar{z}(\tau) +1)^2 \lvert z(\tau)-1\rvert^2 }  - \frac{z(\tau)^2-1}{2z(\tau)^2} \frac{\hat{z}}{w(z(\tau))} \mathfrak{B}(B(z(\tau)))iz'(\tau)\\
\nonumber & \;\;\;\;- \frac{z(\tau)^2-1}{2z(\tau)^2}\frac{\lvert z(\tau)\rvert^2}{w(z(\tau))^2}( \varepsilon_1(z)(\tau)+\varepsilon_2(z)(\tau)+\varepsilon_3(z)(\tau)) \\
\nonumber &= \frac{2\bar{q}(t)}{\bar{q}(t)^2-1}\left(C - \frac{1}{2}\lvert \dot{q}(t)\rvert^2\right)    +  \frac{1-\mu}{(\bar{q}(t)-1)\lvert q(t)+1\rvert}+  \frac{ \mu}{(\bar{q}(t)+ 1)\lvert q(t)-1\rvert}\\
\nonumber &\;\;\;\;- \mathfrak{B}(q(t))i\dot{q}(t)-  \frac{2\bar{q}(t)}{ (\bar{q}(t)^2-1)} \int_t^1 \dot{E}_s (q(s))ds\\
  & \;\;\;\; -  \nabla E_{t}(q(t))  - \frac{2\bar{q}(t)}{(\bar{q}(t)^2-1)}E_t (q(t))
 \end{align*}
where the constant $C$  is
\[
C = \int_0^1 \left( \frac{1}{2}\lvert \dot{q}(t)\rvert^2 - \frac{1-\mu}{\lvert q(t)+1\rvert}-\frac{\mu}{\lvert q(t)-1\rvert} \right)dt +\mathcal{C}(q) +\mathcal{C}^1(q).
\]
This can be rearranged as
\begin{align}\label{eq:fromztoqequationgeneral}
  \ddot{q}(t) +\mathfrak{B}(q(t)) i \dot{q}(t) +\nabla E_t(q(t))= \frac{2\bar{q}(t)}{\bar{q}(t)^2-1}\Phi(t) - \nabla U(q(t)),
\end{align}
where  
\[
U(q) = -\frac{1-\mu}{\lvert q+1\rvert}-\frac{\mu}{\lvert q-1\rvert}
\]
and
\begin{equation}\label{eq:Phigeneral}
  \Phi(t):=  C - \frac{1}{2}\lvert \dot{q}(t)\rvert^2 -U(q(t))-  \int_t^1 \dot{E}_s (q(s))ds- E_t (q(t)).
\end{equation}

\begin{lemma}\label{lem:generalfirstintegral}
The   expression 
    \[
\frac{1}{2}\lvert \dot{q}(t)\rvert^2+ U(q(t))+  \int_t^1 \dot{E}_s (q(s))ds+ E_t (q(t)) 
\]
    is conserved along solutions of \eqref{eq:fromztoqequationgeneral}, that is $\Phi \equiv 0.$
\end{lemma}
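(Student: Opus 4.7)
My plan is to derive a first-order linear ODE for $\Phi$ from \eqref{eq:fromztoqequationgeneral}, integrate it to obtain a conservation law with a free constant on each smooth piece, show the constant is the same across all pieces by passing to $\tau$-coordinates, and finally pin it down to zero via a vanishing-integral identity.

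\emph{Step 1.} I differentiate $\Phi$ directly and substitute \eqref{eq:fromztoqequationgeneral}. Because $\re(\overline{\dot q}\cdot i\dot q)=0$, the magnetic term contributes nothing; the $\dot E_t(q)$ piece produced by $\tfrac{d}{dt}E_t(q)$ is cancelled by the time derivative of $\int_t^1\dot E_s(q(s))\,ds$; and the $\langle\nabla U,\dot q\rangle, \langle\nabla E_t,\dot q\rangle$ terms cancel between $\tfrac{d}{dt}(U+E_t)$ and the corresponding force terms appearing in $\langle\dot q,\ddot q\rangle$. What survives is
\[
\frac{d\Phi}{dt} = -\Phi\cdot \re\!\left(\overline{\dot q}\,\tfrac{2\bar q}{\bar q^2-1}\right).
\]
Using the partial fraction $\tfrac{2\bar q}{\bar q^2-1}=\tfrac{1}{\bar q-1}+\tfrac{1}{\bar q+1}$ and the identity $\re\!\bigl(\tfrac{\overline{\dot q}}{\bar q-a}\bigr)=\tfrac{d}{dt}\log|q-a|$, this rewrites as $\tfrac{d}{dt}\bigl(\Phi\cdot|q^2-1|\bigr)=0$ on each smooth interval $\mathcal J_j$, so $\Phi(t)\cdot|q(t)^2-1|\equiv k_j$ there for some constant $k_j$.

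\emph{Step 2.} To see that $k_j$ does not depend on $j$, I pass to $\tau$-coordinates using the identity $|B(z)^2-1|=w(z)$. The product $w(z(\tau))\,\Phi(t_z(\tau))$ can be written as a sum of terms each manifestly smooth in $(z(\tau),z'(\tau),\tau)$ on all of $S^1$: the key cancellations are $w\cdot\tfrac12|\dot q|^2=\tfrac{\hat z^2|z'|^2}{2|z|^2}$ and $-w\cdot U(q)=\tfrac{(1-\mu)|z-1|^2+\mu|z+1|^2}{2|z|}$, both finite even at $z=\pm 1$, while the $E$-dependent pieces and the constant $wC$ are manifestly smooth in $\tau$ by construction. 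Since $z$ is $C^2$ by \eqref{eq:delayequationgeneral}, the product extends continuously across every collision, so $k_j=k_{j+1}$ for every $j$, and there is a single constant $k$ with $\Phi(t)\,|q(t)^2-1|\equiv k$ on $S^1\setminus\mathfrak C_q$.

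\emph{Step 3, and the main obstacle.} Two integral identities then pin down $k$. A Fubini swap combined with the explicit expression for $C$ in terms of $\mathcal C(q)+\mathcal C^1(q)$ gives $\int_0^1\Phi(t)\,dt=0$; meanwhile the substitution $t=t_z(\tau)$ turns $\int_0^1 dt/|q(t)^2-1|$ into $\int_0^1 d\tau/\hat z = 1/\hat z\ne 0$. Since $\Phi=k/|q^2-1|$, these force $k=0$, and hence $\Phi\equiv 0$. The only delicate step is Step~2: one must check that the apparent singularities of $\Phi$ at the collision times are exactly killed by the factor $|q^2-1|$, making the product genuinely continuous on all of $S^1$. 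This cancellation is intrinsic to the Birkhoff setup — it is the reason the weight $w$ was introduced in the first place — so it should follow cleanly from the explicit formulas for $\dot q$ and $U$ recorded just before the lemma.
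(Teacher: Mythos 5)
Your proposal is correct and follows essentially the same route as the paper: both derive the linear ODE $\dot{\Phi}=g\Phi$ with $g=-\tfrac{d}{dt}\log\lvert q^2-1\rvert$, identify the locally constant quantity $\Phi\cdot\lvert q^2-1\rvert$ (which, via $\lvert B(z)^2-1\rvert=w(z)$, is exactly the paper's $\Psi/4$), glue the constants across collisions by continuity in the $\tau$-variable, and conclude from the mean-zero identity $\int_0^1\Phi\,dt=0$ built into the definition of $C$. The only differences are cosmetic: you integrate the ODE in $t$-coordinates rather than first transforming to $\tau$, and you make the final step quantitative via $\int_0^1 dt/\lvert q^2-1\rvert=1/\hat z$ where the paper argues by sign-constancy of $\Phi$.
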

 \begin{proof}  
We first notice that, by definition of  $C, $ the mean value of $\Phi$ over $S^1$ is zero:
\begin{equation}\label{eq:meanvalueofPhi}
    \int_0^1\Phi(t)dt =0.
\end{equation}
Differentiating  \eqref{eq:Phigeneral} with respect to $t$, we   obtain  
\begin{align*}
\dot{\Phi}(t) &= - \left< \dot{q}(t), \ddot{q}(t)\right> - \left< \dot{q}(t), \nabla U(q(t))\right> +\dot{E}_t(q(t))-\dot{E}_t(q(t)) - \left< \nabla E_t(q(t)), \dot{q}(t)\right>\\
&= \left<  \dot{q}(t) , \mathfrak{B}(q(t)) i \dot{q}(t) +\nabla E_t(q(t))    \right>- \left< \dot{q}(t) , \frac{2\bar{q}(t)}{\bar{q}(t)^2-1}\Phi(t)-\nabla U(q(t))  \right>  \\
&\;\;\;\;\;- \left< \dot{q}(t), \nabla U(q(t))\right> - \left< \nabla E_t(q(t)), \dot{q}(t)\right>\\
&= g(t) \Phi(t), 
\end{align*}
where we define
\[
g(t):=- \left< \dot{q}(t) , \frac{2\bar{q}(t)}{\bar{q}(t)^2-1}\right>. 
\]
Thus, $\Phi(t)$ satisfies the ODE
\begin{equation*}\label{eq:ODEphit}
\dot{\Phi}(t) = g(t)\Phi(t). 
\end{equation*}
We now write this ODE  in terms of $z$ and $\tau=\tau_z(t)$:
\begin{equation*}\label{eq:ODEphitau}
{\Phi}'(\tau) = f(\tau)\Phi(\tau),
\end{equation*}
where  
\begin{align*}
    f(\tau)  := & \; g(t_z(\tau)) \frac{dt}{d\tau}\\
    =& \;   - \left< \dot{q}(t) , \frac{2\bar{q}(t)}{\bar{q}(t)^2-1} \right>   \frac{dt}{d\tau}    \\
    =& \;   - \left<  \frac{1}{2}\left( 1 - \frac{1}{z(\tau)^2}\right)  \frac{\hat{z}}{w(z(\tau ))} z'(\tau) , \frac{4 \bar{z}(\tau) (\bar{z}(\tau)^2+1)}{ ( \bar{z}(\tau)^2-1)^2}\right> \frac{w(z(\tau))}{\hat{z}}     \\
    =& \;   -{\rm Re}\Bigg[    \frac{2(z(\tau)^2+1)}{z(\tau)(z(\tau)^2-1)} z'(\tau)     \Bigg]    \\
    =& \;   {\rm Re}\Bigg[ \left(   \frac{2}{z(\tau)}-\frac{2}{z(\tau)-1}-\frac{2}{z(\tau)+1}     \right)z'(\tau)   \Bigg ]   
\end{align*}
Choose any $\tau_0 \in \mathcal{I}_j$  and solve  the above ODE in $\mathcal{I}_j$: 
 \begin{align*}
     \Phi(\tau)&= \Phi(\tau_0) \exp \left( \int_{\tau_0}^{\tau} f(s) ds\right) \\
     &= \Phi(\tau_0) \exp \left({\rm Re} \int_{\tau_0}^{\tau}   \left(   \frac{2}{z(s)}-\frac{2}{z(s)-1}-\frac{2}{z(s)+1}     \right)z'(s)   ds \right)\\
     &= \Phi(\tau_0) \exp \left( \log   \frac{\lvert z(\tau) \rvert^2 }{ \lvert z(\tau)^2-1 \rvert^2     } + \text{constant} \right) \\
     &=   \Phi(\tau_0)   \frac{\lvert z(\tau) (z(\tau_0)^2-1) \rvert^2 }{\lvert z(\tau_0)(z(\tau)^2-1) \rvert^2} , \quad \tau \in \mathcal{I}_j
 \end{align*}
with a suitable choice of branch for the logarithm. This shows that the function
\begin{align*}
\Psi(\tau):=& \; \Phi(\tau) \frac{\lvert z(\tau)^2-1\rvert^2}{\lvert z(\tau)\rvert^2} \\
=& \; \frac{\lvert z(\tau)^2-1\rvert^2}{\lvert z(\tau)\rvert^2} \Bigg[ C- \int_t^1\dot{E}_s( B(z(\tau_z(s)))    )ds - E_t( B(z(\tau_z(t))))\Bigg] \\
& \;\;- 2 \hat{z}^2 \lvert z'(\tau )\rvert^2 \lvert z(\tau)\rvert^2  +\frac{2(1-\mu)\lvert z(\tau) -1\rvert^2}{ \lvert z(\tau) \rvert } +\frac{2\mu \lvert z(\tau) +1\rvert^2}{\lvert z(\tau) \rvert } 
\end{align*}
is constant on each component $\mathcal{I}_j.$ Since $z \in \mathfrak{L}^*\mathfrak{Z}_0,$ the function $\Psi$ is continuous on the whole circle, and hence there exists a global constant $K \in \R$ such that
\[
\Psi(\tau) = K, \quad \quad \forall \tau \in S^1,
\]
so that
\[
\Phi(\tau) = \frac{\lvert z(\tau)\rvert^2}{\lvert z(\tau)^2-1\rvert^2}K, \quad \quad \tau \in S^1 \setminus \mathfrak{C}_z.
\]
This in particular implies that $\Phi$ does not change its sign and therefore has to vanish because of   \eqref{eq:meanvalueofPhi}.  This completes the proof. 
\end{proof}

 Lemma \ref{lem:generalfirstintegral} shows that   the second-order ODE
\eqref{eq:fromztoqequationgeneral} recovers the Newtonian equation \eqref{eq:twistedODEforq} of the time-dependent two-center Stark-Zeeman system, implying that critical points   of the regularized functional $\mathscr{B}$  correspond to  generalized periodic solutions of the time-dependent two-center Stark-Zeeman system.


 \medskip

In the remainder of this note, we study more closely the relationship between   the critical points of   $\mathscr{B}$ and generalized periodic solutions of \eqref{eq:twistedODEforq}, with particular attention to the underlying symmetry.

Since the function $w(z)$ defined in \eqref{eq:w}  is invariant under the inversion $\C \to \C, \; z \mapsto 1/z,$ it follows immediately that  the regularized functional $\mathscr{B} \colon \mathfrak{L}^*\mathfrak{Z}_0 \to \R$   is invariant under the   involution
\[
I \colon \mathfrak{L}^*\mathfrak{Z}_0 \to \mathfrak{L}^*\mathfrak{Z}_0, \quad I(z)=\frac{1}{z},
\]
that is,  
\begin{equation}\label{eq:symmetry}
\mathscr{B} \circ I = \mathscr{B}.
\end{equation}
Therefore, for all $z \in \mathfrak{L}^*\mathfrak{Z}_0$ and $ \xi \in T_z \mathfrak{L}^*\mathfrak{Z}_0,$ we have
\[
d\mathscr{B}(I(z))dI(z)\xi = d\mathscr{B}(z)\xi, \quad \forall \xi \in T_z \mathfrak{L}^* \mathfrak{Z}_0, \; \forall z \in \mathfrak{L}^* \mathfrak{Z}_0,
\]
which shows that  if $z$ is a critical point of $\mathscr{B},$ then so is $I(z).$ See Figure \ref{fig:inversion_symmetry}. 
 \begin{figure}[!ht]
    \includegraphics[width=0.4\textwidth]{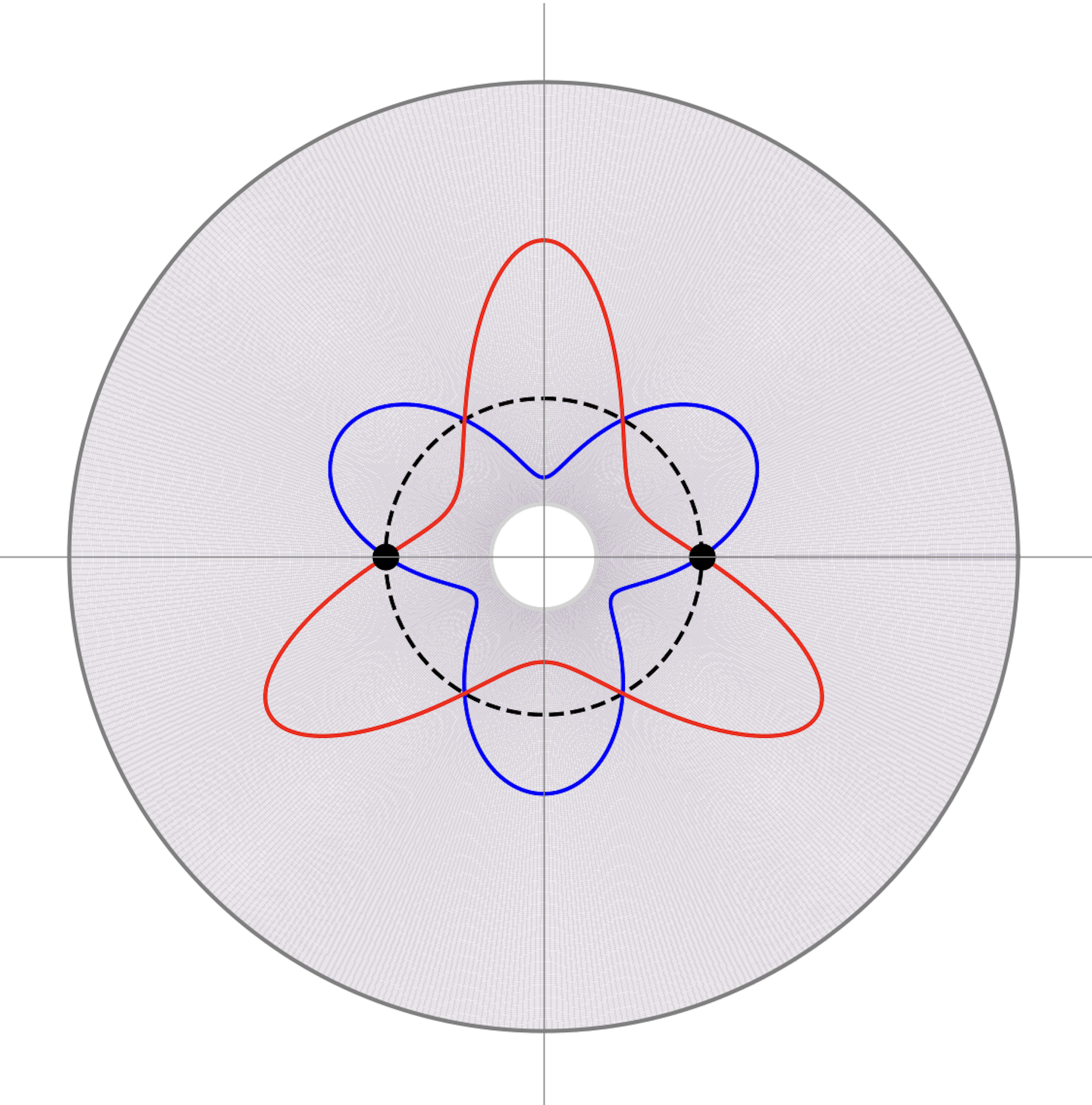}
    \caption{The red and blue periodic orbits are symmetric under the involution $I.$
    The dashed curve indicates the unit circle}
    \label{fig:inversion_symmetry}
\end{figure}
Consequently,  both critical points $z$ and $I(z)$  correspond to the same generalized periodic solution $q_z$ of    \eqref{eq:twistedODEforq}, namely, 
\[
\Sigma(z) = \Sigma(I(z)) = q_z,
\]
where  $\Sigma$ is the map defined   in \eqref{eq:SIGMA}.

Recall that if the total winding number $w(K)$ of a loop $K$ in $\C \setminus \{ E, M\}$--the sum of the winding numbers around the two primaries--is even, then its preimage $B^{-1}(K)$ under the Birkhoff map has two connected components; if odd, then $B^{-1}(K)$ is connected.  See \cite[Proposition 4.2]{CFZ23SZ}. 

For each   $n \in \Z,$ let    $\mathfrak{L}_n\mathfrak{D}  \subset \mathfrak{L}\mathfrak{D}  $   be the subset of loops  with   total winding number   equal to  $n.$ This yields the   decomposition
\[
\mathfrak{L}\mathfrak{D} = \bigsqcup_{n \in \Z} \mathfrak{L}_n\mathfrak{D}
\]
In a similar manner, we define the decomposition  
\[
\mathfrak{L}\mathfrak{Z} = \bigsqcup_{n \in \Z} \mathfrak{L}_n\mathfrak{Z}
\]
Note that $z$ and $I(z)$ share  the same total winding number. 
Since the Birkhoff map $B$ is a branched double cover with two branch points at $E$ and $M,$ it doubles the total winding number. More precisely, if $z \in \mathfrak{L}_n\mathfrak{Z} ,$ then and $q := B(z) \in \mathfrak{L}_{2n}\mathfrak{D}.$ Therefore, for each $n \in \Z$ the map $\Sigma$ induces a diffeomorphism
\[
\Sigma_n \colon \mathfrak{L}_n \mathfrak{Z} / I \to \mathfrak{L}_{2n}\mathfrak{D}
\]

To account for loops with odd winding number, note that if  $q \in \mathfrak{L}_{2n+1}\mathfrak{D}$, then $B^{-1}(q)$ is connected. Following \cite[Section 7]{Fra25SZ}, we introduce   the twisted loop space
\[
\mathfrak{L}_I\mathfrak{Z} := \{ z \in C^\infty(\R, \mathfrak{Z}) \mid z(t+1) = I \circ z(t), \; t \in \R \}.
\]
Note that every element of $\mathfrak{L}_I\mathfrak{Z}$ is not a loop in the usual sense, but   satisfies  a twisted periodicity: as time increases by $1,$ a point is mapped to its image  under the involution $I$.   Given a twisted loop $z \in \mathfrak{L}_I\mathfrak{Z},$ we recover a genuine loop $\tilde z \in \mathfrak{L}\mathfrak{Z}$ by
\[
 \tilde{z}(t):=z(2t), \quad t \in \R.
 \]
 Indeed, using the twisted periodicity we find
 \[
 \tilde{z}(t+1) = z(2t+2) = I \circ z ( 2t+1) = z(2t) = \tilde{z}(t) .
 \]
Thus,  twisted loops correspond to loops with odd winding number. We define the  decomposition 
\[
\mathfrak{L}_I\mathfrak{Z} = \bigsqcup_{n \in \Z}\mathfrak{L}_{I, n+\frac{1}{2}}\mathfrak{Z},
\]
where
\[
\mathfrak{L}_{I, n+\frac{1}{2}} \mathfrak{Z} := \{ z \in \mathfrak{L}_I\mathfrak{Z} \mid \tilde{z} \in \mathfrak{L}_{2n+1}{\mathfrak{Z}}\}
\]
We now define the map 
\[
\Sigma_I \colon \mathfrak{L}_I\mathfrak{Z} \to \mathfrak{L}\mathfrak{D}, \quad \Sigma_I(z):= q_z , 
\]
where $q_z$ is defined as in \eqref{eq:qz}. Although $z$ is not a loop, but a twisted loop, the map $q_z$ is an actual loop:  
\begin{align*}
q_z(t+1) &= \frac{1}{2}\left( z(\tau_z(t+1)) + \frac{1}{z(\tau_z(t+1))}\right)\\
&= \frac{1}{2}\left( z(\tau_z(t)+1 ) + \frac{1}{z(\tau_z(t)+1 )}\right)\\
&= \frac{1}{2}\left( I \circ z(\tau_z(t)) + \frac{1}{ I \circ z(\tau_z(t))}\right)\\
&=\frac{1}{2}\left(   \frac{1}{z(\tau_z(t+1))} + z(\tau_z(t+1)) \right)\\
&= q_z(t)
\end{align*}
Hence, for each $n \in \Z,$ we have  a diffeomorphism
\[
\Sigma_{ I,n } \colon \mathfrak{L}_{{2n}+1} \mathfrak{Z} /I\to \mathfrak{L}_{2n+1}\mathfrak{D}.
\]
 
 Combining the even and odd cases,   we define the extended map
\[
 \Sigma_0:=\Sigma \oplus \Sigma_I \colon \mathfrak{L}\mathfrak{Z} \cup \mathfrak{L}_I \mathfrak{Z} \to \mathfrak{L}\mathfrak{D}
 \]
and   consider the  pullback  functional 
 \[
\Sigma_0^* \mathscr{A} \colon \mathfrak{L}\mathfrak{Z} \cup \mathfrak{L}_I \mathfrak{Z} \to \R.
 \]
Then we obtain a one-to-one correspondence between the critical points
  \begin{align*}
  {\rm Crit}(\Sigma_0^*\mathscr{A})/I \;\;\; \xleftrightarrow{\hspace{1cm}} \;\;\; {\rm Crit} (\mathscr{A})
  \end{align*}
with
\[
  \text{critical points of winding number $\frac{n}{2}$}  \; \xmapsto{\hspace{0.5cm}} \;  {\text{critical points of winding number $n$}}
\]
To include  generalized solutions,  define the extended twisted loop space
\[
\mathfrak{L}_I^*\mathfrak{Z}_0 :=\{z \in \mathfrak{L}_I \mathfrak{Z}_0 \mid \hat{z}\neq 0 \}
\]
and   extend the regularized functional:
\[
\mathscr{B}_0 \colon \mathfrak{L}^*\mathfrak{Z}_0 \cup \mathfrak{L}_I^*\mathfrak{Z}_0  \to \R.
\]
Thus, as before, we  have a bijective  correspondence 
  \[ 
  {\rm Crit} (\mathscr{B}_0)/I \;  \xleftrightarrow{1:1}\; \{\text{generalized periodic solutions of \eqref{eq:twistedODEforq}}\} 
  \]

We summarize the result of this note in the following.

\begin{theorem}\label{thm:main}
    There is a one-to-one correspondence between critical points of the extended regularized functional $\mathscr{B}_0$ modulo the involution $I$ and generalized periodic solutions of the time-dependent Stark-Zeeman system \eqref{eq:twistedODEforq}.
\end{theorem}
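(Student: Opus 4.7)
The plan is to assemble the theorem from the pieces established above, so rather than proving anything new the proof will package everything into a single bijection statement. The strategy is to verify two directions: from a critical point $z$ of $\mathscr{B}_0$ one produces a generalized periodic solution $q = \Sigma_0(z)$ of \eqref{eq:twistedODEforq}, and conversely from any generalized periodic solution $q$ one produces a critical point $z \in \mathfrak{L}^*\mathfrak{Z}_0 \cup \mathfrak{L}_I^*\mathfrak{Z}_0$ of $\mathscr{B}_0$, with the ambiguity in the inverse construction being precisely the involution $I$.

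For the forward direction, let $z$ be a critical point of $\mathscr{B}_0$. I would first treat the untwisted case $z \in \mathfrak{L}^*\mathfrak{Z}_0$. By the Proposition, $z$ solves the delay equation \eqref{eq:delayequationgeneral}, and by Lemma~\ref{lem:finitecollisionset} its collision set $\mathfrak{C}_z$ is finite, so $t_z$ is a homeomorphism of $S^1$ and $q := q_z$ is defined and continuous on all of $S^1$ and smooth on $S^1 \setminus \mathfrak{C}_q$. The computation preceding Lemma~\ref{lem:generalfirstintegral} shows that on each interval $\mathcal{J}_j$, $q$ satisfies the modified Newtonian equation \eqref{eq:fromztoqequationgeneral}, and Lemma~\ref{lem:generalfirstintegral} gives $\Phi \equiv 0$, which turns \eqref{eq:fromztoqequationgeneral} into the genuine Newtonian equation \eqref{eq:twistedODEforSZ}. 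To meet the third clause in the definition of a generalized solution, I would observe that $\Phi \equiv 0$ is precisely the statement that the conserved energy expression extends continuously across the collision times. The twisted case $z \in \mathfrak{L}_I^*\mathfrak{Z}_0$ is reduced to the untwisted one by passing to $\tilde{z}(t) = z(2t)$ and using that $q_z$, as computed in the paragraph introducing $\Sigma_I$, is a genuine loop; alternatively one simply checks that the delay equation and its consequences respect the twisted periodicity.

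For the inverse direction, let $q \colon S^1 \to \mathfrak{D}_0$ be a generalized periodic solution with collision set $\mathfrak{C}_q$ finite. I would construct the preimage under $B$ component by component, splitting into cases by the parity of the total winding number $n(q)$. When $n(q) = 2n$ is even, \cite[Proposition 4.2]{CFZ23SZ} gives two connected preimages $z$ and $I(z)$ in $\mathfrak{L}_n\mathfrak{Z}$; when $n(q) = 2n+1$ is odd, there is one connected preimage which naturally carries the twisted periodicity of $\mathfrak{L}_{I,n+\frac{1}{2}}\mathfrak{Z}$. The time reparametrization is inverted by setting $\tau = \tau_z^{-1}(t)$, which amounts to solving $\frac{dt}{d\tau} = w(z)/\hat{z}$; this can be read off the relation $\dot{q}(t) = \tfrac{1}{2}(1 - z^{-2})(\hat{z}/w(z)) z'(\tau)$, and the constant $\hat{z}$ is determined by the overall period normalization. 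The condition $\hat{z} \neq 0$ holds because $q$ is not identically constant at $\pm 1$. That the resulting $z$ is a critical point of $\mathscr{B}_0$ follows by reversing the calculations of the Proposition: one checks that $q$ satisfying \eqref{eq:twistedODEforq} together with the energy conservation of a generalized solution implies that $z$ satisfies the delay equation \eqref{eq:delayequationgeneral}.

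Finally, the quotient by $I$ is a direct consequence of the symmetry \eqref{eq:symmetry} and of $\Sigma(z) = \Sigma(I(z))$, together with the observation that in the odd-winding case, even though $B^{-1}(q)$ is a single connected preimage, the $I$-action on $\mathfrak{L}_I^*\mathfrak{Z}_0$ corresponds to a half-period shift of the twisted parameterization and therefore identifies the two possible choices of starting point. The main obstacle I expect is the inverse direction: one has to be careful that the reparametrization defined on the complement of $\mathfrak{C}_q$ lifts to a genuine (twisted) loop $z$ that is smooth where required and continuous across collision times, and this in turn relies on the local asymptotics of $q$ near a collision being exactly those produced by $B$ evaluated along a smooth curve through $\pm 1$ with nonvanishing derivative, which is where Lemma~\ref{lem:finitecollisionset} (via its proof that $z'(\tau) \neq 0$ at collisions) is doing the essential work.
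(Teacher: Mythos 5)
Your proposal is correct and follows essentially the same route as the paper, which proves the theorem by assembling the Proposition (critical points of $\mathscr{B}$ solve the delay equation), Lemma~\ref{lem:finitecollisionset}, Lemma~\ref{lem:generalfirstintegral} (so that $\Phi\equiv 0$ turns \eqref{eq:fromztoqequationgeneral} into \eqref{eq:twistedODEforSZ}), and the winding-number/twisted-loop discussion of $\Sigma$, $\Sigma_I$ and the involution $I$. If anything, you are more explicit than the paper about the inverse direction (lifting a generalized solution through $B$ and the reparametrization, and the collision asymptotics needed for the lift to be regular), which the paper leaves implicit in its assertion of the diffeomorphisms $\Sigma_n$ and $\Sigma_{I,n}$.
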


\bibliographystyle{abbrv}
\bibliography{mybibfile}

\begin{thebibliography}{1}

\bibitem{BOV21}
V.~Barutello, R.~Ortega, and G.~Verzini.
\newblock Regularized variational principles for the perturbed {K}epler
  problem.
\newblock {\em Adv. Math.}, 383:Paper No. 107694, 64, 2021.

\bibitem{CFvK17SZ}
K.~Cieliebak, U.~Frauenfelder, and O.~van Koert.
\newblock Periodic orbits in the restricted three-body problem and {A}rnold's
  {$J^+$}-invariant.
\newblock {\em Regul. Chaotic Dyn.}, 22(4):408--434, 2017.

\bibitem{CFZ23SZ}
K.~Cieliebak, U.~Frauenfelder, and L.~Zhao.
\newblock {$J^+$}-invariants for planar two-center {S}tark-{Z}eeman systems.
\newblock {\em Ergodic Theory Dynam. Systems}, 43(7):2258--2292, 2023.

\bibitem{Fra25SZ}
U.~Frauenfelder.
\newblock Periodic orbits in time-dependent planar {S}tark-{Z}eeman systems.
\newblock {\em arXiv preprint}, 2025.

\end{thebibliography}

\end{document}